\newtheorem{theorem}{Theorem}
\theoremstyle{plain}
\newtheorem{lemma}{Lemma}
\numberwithin{equation}{section}
\begin{document}
\title{Difference sequence spaces derived by Generalized weighted mean}
\author{Harun Polat}
\address{Mu\c{s} Alparslan University Art and Science Faculty , Mathematics
Department , 49100 Mu\c{s} , Turkey .}
\email{h.polat@alparslan.edu.tr}
\author{Vatan Karakaya}
\address{Department of Mathematical Engineering , Yildiz Technical
University , Istanbul -TURKEY}
\email{vkkaya@yildiz.edu.tr}
\author{Necip \c{S}\.{I}M\c{S}EK}
\address{\.{I}stanbul Commerce University, Department of Mathematics, \"{U}sk%
\"{u}dar, Istanbul,TURKEY}
\email{necsimsek@yahoo.com}
\keywords{Difference sequence space, generalized weighted mean, $AK$ and $AD$
properties, the $\alpha -,\beta -$ and $\gamma -$ duals and bases of
sequences, matrix mappings.}
\subjclass[2000]{46A05, 46A35, 46A45.}

\begin{abstract}
In this work, we define new sequence spaces by combining generalized
weighted mean and difference operator. Afterward, we investigate topological
structure which are completeness, $AK$-property, $AD$-property. Also, we
compute the $\alpha -$ , $\beta -$ and $\gamma -$ duals, and obtain bases
for these sequence spaces. Finally, necessary and sufficient conditions on
an infinite matrix belonging to the classes $\left( c\left( u,v,\Delta
\right) :\ell _{\infty }\right) $ and $\left( c\left( u,v,\Delta \right)
:c\right) $ are established.
\end{abstract}

\maketitle

\section{\protect\Large Introduction}

In studies on the sequence spaces, there are some basic approaches which are
determination of topologies, matrix mapping and inclusions of sequence
spaces $($see$;$ \cite{W.R}).These methods are applied to study the matrix
domain $\lambda _{A}$ of an infinite matrix $A$ defined by $\lambda _{A}$ $%
=\left\{ x=\left( x_{k}\right) \in w:Ax\in \lambda \right\} .$ Especially,
the weighted mean and the difference operators which are special cases for
the matrix $A$ have been studied extensively via the methods mentioned above.

In the literature, some new sequence spaces are defined by using the
generalized weighted mean and the difference operator or by combining both
of them. For example, in \cite{HKIZ1981}$,$ the difference sequence spaces
are first defined by K\i zmaz. Further, the authors including Ahmad and
Mursaleen \cite{ZUM1987}$,$ \c{C}olak and Et\cite{C.Et}, Ba\c{s}ar and Altay
Karakaya and Polat\cite{vatan polat}, and the others have defined and
studied new sequence spaces by considering matrices that represent
difference operators. The articles concerning this work can be found in the
list of references \cite{BAHP2006}, \cite{ALBA2007}, \cite{ETBASAR} and \cite%
{HPFB2007}. On the other hand, by using generalized weighted mean, several
authors defined some new sequence spaces and studied some properties of
these spaces. Some of them are as follows: Malkowsky and Sava\c{s} \cite%
{MS2004} have defined the sequence spaces $Z\left( u,v,\lambda \right) $
which consists of all sequences such that $G\left( u,v\right) -$transforms
of them are in $\lambda \in \left\{ \ell _{\infty },c,c_{0},\ell
_{p}\right\} .$ Ba\c{s}ar and Altay \cite{BAFB2006} have defined and studied
the sequence spaces of nonabsolute type drived by generalized weighted mean
over the paranormed spaces.

In this work, our purpose is to introduce new sequence spaces by combining
the generalized weighted mean and difference operator and also to
investigate topological structure{\tiny \ }which are completeness, $AK,AD$
properties, the $\alpha -,\beta -,\gamma -$duals, and the bases of these
sequence spaces$.$ In addition, we characterize some matrix mappings on
these spaces.

\section{\protect\Large Preliminaries and Notations}

By $w$, we denote the space of all real or complex valued sequences. Any
vector subspace of $w$ is called a sequence space. We write $\ell _{\infty }$%
, $c$ and $c_{0}$ for the spaces of all bounded, convergent and null
sequences, respectively. Also by $bs,cs,\ell _{1,}$ we denote the spaces of
all bounded, convergent and absolutely convergent series, respectively.

A sequence space $\lambda $ with a linear topology is called a $K$-space
provided each of the maps $p_{i}:\lambda \rightarrow C$ defined by $%
p_{i}\left( x\right) =x_{i}$ is continuous for all $i\in N$ ; where $C$
denotes the complex field and $N=\left\{ 0,1,2,...\right\} $. A $K$-space $%
\lambda $ is called an $FK$ space provided $\lambda $ is a complete linear
metric space. An $FK$-space whose topology is normable is called a $BK$%
-space. An $FK$-space $\lambda $ is said to have $AK$ property, if $\varphi
\subset \lambda $ and $\{e^{(k)}\}$ is a basis for $\lambda $, where $%
e^{(k)} $ is a sequence whose only non-zero term is a $1$ in $kth$ place for
each $k\in N$ and $\varphi =span\{e^{(k)}\}$, the set of all finitely
non-zero sequences. If $\varphi $ is dense in $\lambda $, then $\lambda $ is
called an $AD$-space, thus $AK$ implies $AD$. For example, the spaces $%
c_{0},cs$, and $\ell _{p}$ are $AK$-spaces, where $1<p<\infty .$

Let $\lambda ,\mu $ be any two sequence spaces and $A=(a_{nk})$ be an
infinite matrix of real numbers $a_{nk}$, where $n,k\in N$. Then, we write $%
Ax=((Ax)n)$, the $A-$transform of $x$, if $A_{n}\left( x\right)
=\sum\nolimits_{k}a_{nk}x_{k}$ converges for each $n\in N$. If $x\in \lambda 
$ implies that $Ax\in \mu $ then we say that $A$ defines a matrix mapping
from $\lambda $ into $\mu $ and denote it by $A:\lambda \rightarrow \mu .$
By $(\lambda :\mu )$, we mean the class of all infinite matrices $A$ such
that $A:\lambda \rightarrow \mu $. For simplicity in notation, here and in
what follows, the summation without limits runs from $0$ to $\infty $. We
write $e=(1,1,1,...)$ and $U$ for the set of all sequences $u=(u_{k})$ such
that $u_{k}=0$ for all $k\in N$. For $u\in U$, let $1/u=(1/u_{k})$. Let $%
u,v\in U$ and define the matrix $G(u,v)=(g_{nk})$ by 
\begin{equation*}
g_{nk}=\left\{ 
\begin{array}{c}
u_{n}v_{k}\text{ if }0\leq k\leq n \\ 
0\text{ \ \ if \ }k>n%
\end{array}
\right.
\end{equation*}
for all $k,n\in N,$ where $u_{n}$ depends only on $n$ and $v_{k}$ only on $k$%
. The matrix $G(u,v)$, defined above, is called as generalized weighted mean
or factorable matrix. The matrix domain $\lambda _{A}$ of an infinite matrix 
$A$ in a sequence space $\lambda $ is defined by $\lambda _{A}=\left\{
x=(x_{k})\in w:Ax\in \lambda \right\} $ which is a sequence space. Although
in the most cases the new sequence space $\lambda _{A}$ generated by the
limitation matrix $A$ from a sequence space $\lambda $ is the expansion or
the contraction of the original space $\lambda $.

The $continuous$ $dual$ $X^{^{\prime }}$ of a normed space $X$ is defined as
the space of all bounded linear functionals on $X$. If A is triangle, that
is $a_{nk}=0$ if $k>n$ and $a_{nn}\neq 0$ for all $n\in N$, and $\lambda $
is a sequence space, then $g\in \lambda _{A}^{^{\prime }}$ if and only if $%
f=g\circ A,\ f\in $ $\lambda ^{^{\prime }}$.

Let $X$ be a seminormed space. A set $Y\subset X$ is called fundamental if
the span of $Y$ is dense in $X$. One of the useful results on fundamental
set which is an application of Hahn--Banach Theorem as follows: If $Y$ is
the subset of a seminormed space $X$ and $f\in X^{^{\prime }}$, $f(Y)=0$
implies $f=0$, then $Y$ is fundamental [\cite{Wilansky} p. 39].

\section{{\protect\Large The sequence spaces }$\protect\lambda \left(
u,v,\Delta \right) ${\protect\Large \ for }$\protect\lambda \in \left\{ \ell
_{\infty },c,c_{0}\right\} $}

In this section, we define the new sequence spaces $\lambda \left(
u,v,\Delta \right) $ for $\lambda \in \left\{ \ell _{\infty
},c,c_{0}\right\} $ derived by the generalized weighted mean, and prove that
these are the complete normed linear spaces and compute their $\alpha
-,\beta -,$ and $\gamma -$ duals. Furthermore, we give the basis for the
spaces $\lambda \left( u,v,\Delta \right) $ for $\lambda \in \left\{
c,c_{0}\right\} .$ Also we show that these spaces have $AK$ and $AD$
properties.

We define the sequence spaces $\lambda \left( u,v,\Delta \right) $ for $%
\lambda \in \left\{ \ell _{\infty },c,c_{0}\right\} $ by 
\begin{equation*}
\lambda \left( u,v,\Delta \right) =\left\{ x=\left( x_{k}\right) \in w:\text{
}y=\sum\limits_{i=1}^{k}u_{k}v_{i}\Delta x_{i}\in \lambda \right\} .
\end{equation*}
We write $\Delta x=\left( \Delta x_{i}\right) $ for the sequence $\left(
x_{k}-x_{k-1}\right) $ and use the convention that any term with negative
subscript is equal to naught.

If $\lambda $ is any normed sequence space, then we call the matrix domain $%
\lambda _{G\left( u,v,\Delta \right) }$ as the generalized weighted mean
difference sequence space. It is natural that these spaces may also be
defined according to matrix domain as follows: 
\begin{equation*}
\lambda \left( u,v,\Delta \right) =\left\{ \lambda \right\} _{G\left(
u,v,\Delta \right) }
\end{equation*}
where is $G\left( u.v,\Delta \right) =G\left( u,v\right) .\Delta $.

Define the sequence $y=\left( y_{k}\right) ;$ which will be frequently used
as the $G\left( u.v,\Delta \right) $-transform of a sequence $x=\left(
x_{k}\right) $ i.e. 
\begin{equation}
y_{k}=\sum\limits_{i=0}^{k}u_{k}v_{i}\Delta
x_{i}=\sum\limits_{i=0}^{k}u_{k}\nabla v_{i}x_{i},\left( \nabla
v=v_{i}-v_{i+1}\right) \ \ \left( k\in N\right) .  \label{3.1}
\end{equation}%
Since the proof may also be obtained in the similar way for the other
spaces, to avoid the repetition of the similar statements, we give the proof
only for one of those spaces.

\begin{theorem}
The sequence spaces $\lambda \left( u,v,\Delta \right) $ for $\lambda \in
\left\{ \ell _{\infty },c,c_{0}\right\} $ are the complete normed linear
spaces with respect to the norm defined by 
\begin{equation}
\left\| x\right\| _{\lambda \left( u,v,\Delta \right) }=\sup_{k}\left|
\sum\limits_{i=0}^{k}u_{k}v_{i}\Delta x_{i}\right| =\left\| y\right\|
_{\lambda }  \label{3.2}
\end{equation}

\begin{proof}
The linearity of $\lambda \left( u,v,\Delta \right) $ for $\lambda \in
\left\{ \ell _{\infty },c,c_{0}\right\} $ with respect to the coordinatewise
addition and scaler multiplication follows \ the following inequalities
which are satisfied for $x$,$t\in \lambda \left( u,v,\Delta \right) $ for $%
\lambda \in \left\{ \ell _{\infty },c,c_{0}\right\} $\ and $\alpha $,$\beta
\in R$\ 
\begin{equation}
\underset{k\in N}{\sup }\left\vert \sum\limits_{i=0}^{k}u_{k}v_{i}\Delta
\left( \alpha x_{i}+\beta t_{i}\right) \right\vert \leq \left\vert \alpha
\right\vert \underset{k\in N}{\sup }\left\vert
\sum\limits_{i=0}^{k}u_{k}v_{i}\Delta x_{i}\right\vert +\left\vert \beta
\right\vert \underset{k\in N}{\sup }\left\vert
\sum\limits_{i=0}^{k}u_{k}v_{i}\Delta t_{i}\right\vert .  \label{3.3}
\end{equation}%
After this step, we must show that the spaces $\lambda \left( u,v,\Delta
\right) $ for $\lambda \in \left\{ \ell _{\infty },c,c_{0}\right\} $ hold
the norm conditions and the completeness with respect to given norm. It is
easy to show that $\left( 3.2\right) $ holds the norm condition for the
spaces $\lambda \left( u,v,\Delta \right) $ for $\lambda \in \left\{ \ell
_{\infty },c,c_{0}\right\} $. To prove the completeness of the space $\ell
_{\infty }\left( u,v,\Delta \right) $, let us take any Cauchy sequence $%
\left( x^{n}\right) $ in the space $\ell _{\infty }\left( u,v,\Delta \right) 
$. Then for a given $\varepsilon >0$, there exists a positive integer $%
N_{0}\left( \varepsilon \right) $ such that $\left\Vert
x^{n}-x^{r}\right\Vert _{\lambda \left( u,v,\Delta \right) }<\varepsilon $\
for all$\ n$, $r>N_{0}\left( \varepsilon \right) $. Hence fixed $i\in N$, 
\begin{equation*}
\left\vert G\left( u.v,\Delta \right) \left( x_{i}^{n}-x_{i}^{r}\right)
\right\vert <\varepsilon
\end{equation*}%
for all $n$,$r\geq N_{0}\left( \varepsilon \right) $. Therefore the sequence 
$\left( \left( G\Delta \right) x^{n}\right) $ is a Cauchy sequence of real
numbers for every $n\in N$. Since $R$ is complete, it converges,that is; 
\begin{equation*}
\left( \left( G\left( u.v,\Delta \right) \right) x^{r}\right) _{i\in
N}\rightarrow \left( \left( G\left( u.v,\Delta \right) \right) x\right)
_{i\in N}
\end{equation*}%
as $r\rightarrow \infty $. So we have 
\begin{equation*}
\left\vert G\left( u.v,\Delta \right) \left( x_{i}^{n}-x_{i}\right)
\right\vert <\varepsilon .
\end{equation*}%
for every $n\geq N_{0}\left( \varepsilon \right) $ and as $r\rightarrow
\infty $. This implies that $\left\Vert x^{n}-x\right\Vert _{\lambda \left(
u,v,\Delta \right) }<\varepsilon $ for every $n\geq N_{0}\left( \varepsilon
\right) $. Now we must show that $x\in \ell _{\infty }\left( u,v,\Delta
\right) $. We have 
\begin{equation*}
\underset{k}{\sup }\left\vert \left( G\left( u.v,\Delta \right) x\right)
_{k}\right\vert \leq \left\Vert x^{n}\right\Vert _{\lambda \left( u,v,\Delta
\right) }+\left\Vert x^{n}-x\right\Vert _{\lambda \left( u,v,\Delta \right)
}=O\left( 1\right)
\end{equation*}%
This implies that $x=\left( x_{i}\right) \in \ell _{\infty }\left(
u,v,\Delta \right) $. Therefore $\ell _{\infty }\left( u,v,\Delta \right) $
is a Banach space. It can be shown that $c\left( u,v,\Delta \right) $ and $%
c_{0}\left( u,v,\Delta \right) $ are closed subspaces of $\ell _{\infty
}\left( u,v,\Delta \right) $ which leads us to the consequence that the
spaces $c\left( u,v,\Delta \right) $ and $c_{0}\left( u,v,\Delta \right) $
are also the Banach spaces with the norm $(3.2)$.

Furthermore, since $\ell _{\infty }\left( u,v,\Delta \right) $ is a Banach
space with continuous coordinates, i.e ;

$\left\| G\left( u.v,\Delta \right) \left( x^{k}-x\right) \right\| _{\lambda
\left( u,v,\Delta \right) }\rightarrow 0$ implies $\left| G\left( u.v,\Delta
\right) \left( x_{i}^{k}-x_{i}\right) \right| \rightarrow 0$ for all $i\in N$%
, it is a $BK$-space.
\end{proof}
\end{theorem}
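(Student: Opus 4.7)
My plan is to exploit the fact that $G(u,v,\Delta)=G(u,v)\cdot \Delta$ is a product of two lower triangular matrices, hence itself a triangle with a two-sided inverse on $w$. This observation turns the map $x\mapsto y$ defined in (3.1) into a linear bijection $w\to w$ whose restriction to $\lambda(u,v,\Delta)$ is, by the very definition (3.2), an \emph{isometric isomorphism} onto $\lambda$ equipped with its standard sup-norm. Once this identification is in hand, every claim of the theorem (the norm axioms, completeness, and the $BK$-property) can be transferred from the corresponding classical fact about $\ell_{\infty}$, $c$, or $c_{0}$, without having to redo any Cauchy-sequence bookkeeping from scratch.

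Concretely, the steps I would carry out are: first, record that the finite sum $y_{k}=\sum_{i=0}^{k}u_{k}v_{i}\Delta x_{i}$ is linear in $x$, so $\lambda(u,v,\Delta)$ is a subspace of $w$ and inequality (3.3) is immediate; second, verify the three norm axioms for (3.2), where the only slightly non-trivial one is definiteness, which uses triangularity of $G(u,v,\Delta)$ together with the tacit assumption $u_{n},v_{n}\neq 0$ to recover $x=0$ from $y=0$ by back-substitution; third, to establish completeness of $\ell_{\infty}(u,v,\Delta)$, take a Cauchy sequence $(x^{(n)})$, set $y^{(n)}=G(u,v,\Delta)x^{(n)}$, observe that $(y^{(n)})$ is Cauchy in the Banach space $\ell_{\infty}$, let $y$ be its sup-norm limit, and define $x$ to be the preimage of $y$ under the inverse triangle; then $\|x^{(n)}-x\|_{\ell_{\infty}(u,v,\Delta)}=\|y^{(n)}-y\|_{\ell_{\infty}}\to 0$ closes the argument. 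For $c(u,v,\Delta)$ and $c_{0}(u,v,\Delta)$ I would simply note that $c$ and $c_{0}$ are closed in $\ell_{\infty}$ and that closedness is transferred by the isomorphism, so both are Banach as closed subspaces of $\ell_{\infty}(u,v,\Delta)$. Finally, the $BK$-property follows because each coordinate functional on $\lambda(u,v,\Delta)$ factors as a coordinate functional on $\lambda$ composed with the continuous triangular inverse.

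I do not anticipate a genuine obstacle; this is a standard matrix-domain transference argument. The one point that deserves explicit attention is the invertibility of $G(u,v,\Delta)$, which hinges on $u_{n}\neq 0$ and $v_{n}\neq 0$ (the hypothesis packaged into $u,v\in U$). Once that is recorded, everything else is bookkeeping, and the proofs for the $c$ and $c_{0}$ cases can legitimately be reduced to the $\ell_{\infty}$ case via the closed-subspace observation rather than being reproduced independently.
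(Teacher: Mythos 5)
Your proposal is correct, but it does not follow the paper's route. The paper proves Theorem 1 by hand: it runs the coordinatewise Cauchy-sequence argument directly in $\ell_{\infty}\left( u,v,\Delta \right)$ (fix $i$, note the $i$-th coordinates of $G\left( u,v,\Delta \right) x^{n}$ form a Cauchy sequence of reals, pass to the limit, then check the limit point lies in the space via the triangle inequality), and only afterwards handles $c\left( u,v,\Delta \right)$ and $c_{0}\left( u,v,\Delta \right)$ as closed subspaces; the linear isomorphism $\lambda \left( u,v,\Delta \right) \cong \lambda$ is established separately, later, as Theorem 3, and is not invoked in the completeness proof. You instead front-load that isomorphism: since $G\left( u,v,\Delta \right) =G\left( u,v\right) \Delta$ is a triangle (this is where $u_{n}\neq 0$, $v_{n}\neq 0$ enters, and you rightly flag it), the map $x\mapsto y$ of $\left( 3.1\right)$ is an isometric bijection of $\lambda \left( u,v,\Delta \right)$ onto $\left( \lambda ,\left\Vert \cdot \right\Vert _{\infty }\right)$, so norm axioms, completeness and the $BK$-property all transfer from the classical spaces; this is the standard ``matrix domain of a triangle over a $BK$-space is a $BK$-space'' argument in the sense of Wilansky. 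What your route buys is uniformity (all three spaces at once), a cleaner completeness step with no coordinatewise bookkeeping, and an immediate $BK$-property because the inverse triangle is row-finite, hence its coordinate functionals are continuous; what the paper's route buys is self-containedness at this point of the text, since it does not presuppose the isometry proved in Theorem 3 (though, as you observe, the two arguments ultimately rest on the same computation). Your treatment of $c$ and $c_{0}$ via closedness in $\ell_{\infty}$ coincides with the paper's.
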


We define a Schauder basis of a normed space. If a normed sequence spaces $%
\lambda $ contains a sequence $\left( b_{n}\right) $ such that , for every $%
x\in \lambda $, there is unique sequence of scalars $\left( \alpha
_{n}\right) $ for which 
\begin{equation*}
g\left( x-\sum\limits_{k=0}^{n}\alpha _{n}b_{k}\right) \rightarrow 0\text{
as }n\rightarrow \infty \text{.}
\end{equation*}%
Then $\left( b_{n}\right) $ is called a Schauder basis for $\lambda $. The
series $\sum \alpha _{k}b_{k}$ that has the sum $x$ is called the expansion
of $x$ in $\left( b_{n}\right) $, and we write $x=\sum \alpha _{k}b_{k}$,
Maddox [\cite{Maddox} , p.98]

\begin{theorem}
Let $\lambda _{k}=\left( G\left( u.v,\Delta \right) x\right) _{k}$ for all\ $%
k\in N$. Define the sequence $b^{\left( k\right) }=\left\{ b_{n}^{\left(
k\right) }\right\} _{n\in N}$ of the elements of the space $c_{0}\left(
u,v,\Delta \right) $ by 
\begin{equation*}
b_{n}^{\left( k\right) }=\left\{ 
\begin{array}{cc}
\frac{1}{u_{n}v_{k}}-\frac{1}{u_{n}v_{k+1}} & \left( 0<k<n\right) \text{,}
\\ 
\frac{1}{u_{n}v_{n}} & \left( k=n\right) \text{,} \\ 
0 & \left( k>n\right) \text{.}%
\end{array}
\right.
\end{equation*}
for every fixed $k\in N$. Then the following assertions are true:

$i)$ The sequence $\left\{ b^{\left( k\right) }\right\} _{k\in N}$ is basis
for the space $c_{0}\left( u,v,\Delta \right) $,and any $x\in c_{0}\left(
u,v,\Delta \right) $ has a unique representation of the form 
\begin{equation*}
x=\sum\limits_{k}\lambda _{k}b^{\left( k\right) }\text{ }
\end{equation*}
$ii)$ The set $\left\{ e\text{,}b^{\left( k\right) }\right\} $ is a basis
for the space $c\left( u,v,\Delta \right) $,and any $x\in c\left( u,v,\Delta
\right) $ has a unique representation of form 
\begin{equation*}
x=le+\sum\limits_{k}\left( \lambda _{k}-l\right) b^{\left( k\right) }\text{,}
\end{equation*}
where $\lambda _{k}=\left( G\left( u.v,\Delta \right) x\right) _{k}$ $\ k\in
N$ and $l=\underset{k\rightarrow \infty }{\lim }\left( G\left( u.v,\Delta
\right) x\right) _{k}$.
\end{theorem}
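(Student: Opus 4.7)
The strategy is to leverage the fact, established in Theorem~1, that $G(u,v,\Delta)$ is a triangle and that the map $x\mapsto y=G(u,v,\Delta)x$ is a linear isometry from $\lambda(u,v,\Delta)$ onto $\lambda$ for $\lambda\in\{c_{0},c\}$. Under such an isometry, Schauder bases correspond to Schauder bases, so the whole task reduces to identifying the preimages of the known bases $\{e^{(k)}\}$ of $c_{0}$ and $\{(1,1,\ldots),\,e^{(k)}\}$ of $c$ under $G(u,v,\Delta)$.

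For assertion (i), the first step is to record the direct finite-sum identity $G(u,v,\Delta)b^{(k)}=e^{(k)}$ for every $k$. Given the definition of $b^{(k)}$, the differences $\Delta b_{i}^{(k)}$ vanish outside $\{k,k+1\}$, so only two terms contribute to $\sum_{i=0}^{n}u_{n}v_{i}\,\Delta b_{i}^{(k)}$ and they telescope: the sum is empty for $n<k$, equals $1$ for $n=k$, and is $0$ for $n>k$. Granted this, take $x\in c_{0}(u,v,\Delta)$, set $y=G(u,v,\Delta)x$ so that $\lambda_{k}=y_{k}$, and form the partial sums $s^{[m]}=\sum_{k=0}^{m}\lambda_{k}b^{(k)}$. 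Linearity combined with the previous identity gives $G(u,v,\Delta)s^{[m]}=\sum_{k=0}^{m}y_{k}e^{(k)}$, so the norm formula of Theorem~1 yields
\begin{equation*}
\bigl\|x-s^{[m]}\bigr\|_{c_{0}(u,v,\Delta)}=\Bigl\|y-\sum_{k=0}^{m}y_{k}e^{(k)}\Bigr\|_{\infty}\longrightarrow 0,
\end{equation*}
because $\{e^{(k)}\}$ is a Schauder basis of $c_{0}$. For uniqueness, if $x=\sum_{k}\mu_{k}b^{(k)}$ in $c_{0}(u,v,\Delta)$, the continuity of $G(u,v,\Delta)$ on the $BK$-space (Theorem~1) transports this relation to $y=\sum_{k}\mu_{k}e^{(k)}$ in $c_{0}$, and uniqueness of the canonical basis expansion in $c_{0}$ forces $\mu_{k}=y_{k}=\lambda_{k}$.

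For assertion (ii), the $c$-side model is the Schauder basis $\{(1,1,\ldots),\,e^{(k)}\}$, under which any $y\in c$ with $\lim_{k}y_{k}=l$ decomposes as $y=l\,(1,1,\ldots)+\sum_{k}(y_{k}-l)e^{(k)}$. Let $\tilde{e}$ denote the unique preimage $G(u,v,\Delta)^{-1}(1,1,\ldots)$, which exists because $G(u,v,\Delta)$ is a triangle and which the statement denotes by $e$. For $x\in c(u,v,\Delta)$ with $y=G(u,v,\Delta)x\to l$, the element $x-l\tilde{e}$ satisfies $G(u,v,\Delta)(x-l\tilde{e})=y-l(1,1,\ldots)\in c_{0}$, hence lies in $c_{0}(u,v,\Delta)$. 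Applying part~(i) to it delivers $x-l\tilde{e}=\sum_{k}(\lambda_{k}-l)b^{(k)}$, which rearranges to the stated expansion. Uniqueness follows by the same push-through argument: any expansion $x=\alpha\tilde{e}+\sum_{k}\beta_{k}b^{(k)}$ is mapped by $G(u,v,\Delta)$ to $\alpha(1,1,\ldots)+\sum_{k}\beta_{k}e^{(k)}$, whose uniqueness in $c$ forces $\alpha=l$ and $\beta_{k}=\lambda_{k}-l$.

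The only step that requires an actual calculation is the telescoping verification $G(u,v,\Delta)b^{(k)}=e^{(k)}$, and that is also where I expect the main technical attention to go, since it is the identity that pins down the specific coefficients in the formula for $b_{n}^{(k)}$; everything else is the standard transfer-of-basis argument through a triangle matrix domain, driven entirely by the isometry and $BK$-structure supplied by Theorem~1.
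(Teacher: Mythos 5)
The paper states this theorem without proof, so there is no argument of the authors' to compare against; judged on its own, your transfer-of-basis skeleton (push everything through the triangle $T=G(u,v,\Delta)$, use that $\{e^{(k)}\}$ and $\{e,e^{(k)}\}$ are bases of $c_{0}$ and $c$, and invoke the norm identity of Theorem~1) is exactly the standard intended argument. The trouble is the one step you yourself isolate as ``the only actual calculation'': the identity $G(u,v,\Delta)b^{(k)}=e^{(k)}$, justified by the claim that $\Delta b^{(k)}_{i}$ vanishes outside $\{k,k+1\}$. That claim fails for $b^{(k)}$ exactly as printed in the statement: for $n>k$ the printed entry is $b^{(k)}_{n}=\frac{1}{u_{n}v_{k}}-\frac{1}{u_{n}v_{k+1}}$, which still depends on $n$ through $u_{n}$, so for $i\geq k+2$ one has $\Delta b^{(k)}_{i}=\left(\frac{1}{u_{i}}-\frac{1}{u_{i-1}}\right)\left(\frac{1}{v_{k}}-\frac{1}{v_{k+1}}\right)\neq 0$ unless $u$ is constant, and a direct computation gives $\left(G(u,v,\Delta)b^{(k)}\right)_{k+1}=\frac{v_{k}-v_{k+1}}{v_{k}}\left(\frac{u_{k+1}}{u_{k}}-1\right)$ rather than $0$. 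The identity you need does hold for the $k$-th column of $(G(u,v)\Delta)^{-1}=\Delta^{-1}G(u,v)^{-1}$, namely $b^{(k)}_{n}=\frac{1}{u_{k}v_{k}}-\frac{1}{u_{k}v_{k+1}}$ for $n>k$ and $\frac{1}{u_{k}v_{k}}$ for $n=k$; in other words the printed $u_{n}$ should be $u_{k}$ (the same index slip occurs in the paper's inversion formula (3.4), where the factor inside the sum should be $1/u_{i}$, not $1/u_{k}$). So either you correct $b^{(k)}$ --- in which case your argument for (i) goes through verbatim --- or the telescoping step collapses; as written, the proof does not establish the statement as printed, and carrying out the calculation you deferred would have exposed this.

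A second, smaller instance of the same phenomenon occurs in (ii): you silently replace $e$ by $\tilde{e}=G(u,v,\Delta)^{-1}(1,1,1,\ldots)$, asserting that this is ``what the statement denotes by $e$''. It is not: in the paper $e=(1,1,1,\ldots)$ itself, and $G(u,v,\Delta)e=(u_{n}v_{0})_{n}$, so the expansion $x=le+\sum_{k}(\lambda_{k}-l)b^{(k)}$ with $l=\lim_{k}(G(u,v,\Delta)x)_{k}$ is correct only when $e$ is read as the preimage of $(1,1,1,\ldots)$ (or when $u_{n}v_{0}=1$ for all $n$). Your reinterpretation is the right repair, and with it the reduction of (ii) to (i) is sound, but it should be flagged as a correction to the theorem rather than presented as its content.
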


\begin{theorem}
The sequence spaces $\lambda \left( u,v,\Delta \right) $ for $\lambda \in
\left\{ \ell _{\infty },c,c_{0}\right\} $ are linearly isomorphic to the
spaces $\lambda \in \left\{ \ell _{\infty },c,c_{0}\right\} $ respectively,
i.e.,

$\ell _{\infty }\left( u,v,\Delta \right) \cong \ell _{\infty }$, $c\left(
u,v,\Delta \right) \cong c$ and $c_{0}\left( u,v,\Delta \right) \cong c_{0}$.

\begin{proof}
To prove the fact $c_{0}\left( u,v,\Delta \right) \cong c_{0}$, we should
show the existence of a linear bijection between the spaces $c_{0}\left(
u,v,\Delta \right) $ and $c_{0}$. Consider the transformation $T$ defined
with the notation $(3.1)$, from $c_{0}\left( u,v,\Delta \right) $ to $c_{0}$
by $x\rightarrow y=Tx$. The linearity of $T$ from $(3.3)$ is clear.

Further, it is trivial that $x=0$ whenever $Tx=0$ and hence $T$ is injective.

Let $y\in c_{0}$ and define the sequence $x=\left\{ x_{k}\right\} $ by 
\begin{equation}
x_{k}=\sum\limits_{i=0}^{k-1}\frac{1}{u_{k}}\left( \frac{1}{v_{i}}-\frac{1}{%
v_{i+1}}\right) y_{i}+\frac{1}{u_{k}v_{k}}y_{k}\text{ \ \ \ }\left( k\in
N\right) .  \label{3.4}
\end{equation}
Then 
\begin{equation*}
\underset{k\rightarrow \infty }{\lim }\left( G\left( u.v,\Delta \right)
x\right) _{k}=\underset{k\rightarrow \infty }{\lim }\sum%
\limits_{i=0}^{k}u_{k}v_{k}\Delta x_{i}=\underset{k\rightarrow \infty }{\lim 
}y_{k}=0\text{ }
\end{equation*}
Thus we have that $x\in c_{0}\left( u,v,\Delta \right) $. Consequently, $T$
is surjective and is norm preserving. Hence, $T$ is a linear bijection which
therefore says us that the spaces $c_{0}\left( u,v,\Delta \right) $ and $%
c_{0}$ are linearly isomorphic. In the same way, it can be shown that $%
c\left( u,v,\Delta \right) $ and $\ell _{\infty }\left( u,v,\Delta \right) $
are linearly isomorphic to $c$ and $\ell _{\infty }$, respectively, and so
we omit the detail.
\end{proof}
\end{theorem}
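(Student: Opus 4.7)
The plan is to exhibit an explicit linear bijection $T\colon \lambda(u,v,\Delta)\to \lambda$ for each $\lambda\in\{\ell_\infty,c,c_0\}$, namely the transform $T(x)=y$ given by (3.1), and to verify that it is an isometry with respect to the norm (3.2). Since the argument is identical in all three cases, following the paper's convention I would write it out only for $\lambda=c_0$ and indicate that the other two cases require only cosmetic changes.

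Linearity of $T$ is immediate from the inequality (3.3), and norm preservation $\|Tx\|_{c_0}=\|x\|_{c_0(u,v,\Delta)}$ is built directly into the definition of the domain norm in (3.2). Injectivity is equally transparent: the matrix $G(u,v)\Delta$ is lower triangular with nonzero diagonal entries $u_k v_k$, so $Tx=0$ forces, by induction on $k$, that $x_k=0$ for every $k\in N$.

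The one nontrivial step is surjectivity, and here the plan is to produce the preimage explicitly. Given $y\in c_0$, set $s_k:=y_k/u_k$; then the defining relation $y_k=u_k\sum_{i=0}^{k} v_i\Delta x_i$ is equivalent to $s_k-s_{k-1}=v_k \Delta x_k$, hence $\Delta x_k=(s_k-s_{k-1})/v_k$ (with the conventions $s_{-1}=0$, $x_{-1}=0$). Telescoping $x_k=\sum_{i=0}^{k}\Delta x_i$ and grouping terms by $y_i$ gives precisely the formula (3.4); one then verifies by direct substitution that $T$ applied to this $x$ returns $y$. Since $Tx=y\in c_0$, the constructed $x$ lies in $c_0(u,v,\Delta)$, which is exactly surjectivity, and it simultaneously delivers the equality $\lim_k(G(u,v,\Delta)x)_k=\lim_k y_k=0$ displayed in the paper.

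Putting the four pieces together shows that $T$ is a linear isometric isomorphism, so $c_0(u,v,\Delta)\cong c_0$. The $c$ and $\ell_\infty$ cases are handled identically: the same $T$ with the same inverse formula works, and $Tx$ inherits convergence (respectively boundedness) from $x$ and conversely through the isometry. I do not anticipate any real obstacle beyond keeping indices straight in the telescoping computation that inverts $G(u,v)\Delta$; the rest is bookkeeping.
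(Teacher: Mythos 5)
Your proposal is correct and follows essentially the same route as the paper: the transform $T$ of (3.1), linearity via (3.3), injectivity from the triangular structure, surjectivity by exhibiting the explicit preimage, and norm preservation built into (3.2). One small remark: your telescoping, carried out correctly, produces the coefficient $\frac{1}{u_{i}}\left( \frac{1}{v_{i}}-\frac{1}{v_{i+1}}\right)$ (that is, $y_{i}/u_{i}$) inside the sum, so it does not literally reproduce the paper's (3.4), which prints $\frac{1}{u_{k}}$ there; this is a misprint in the paper rather than a gap in your argument.
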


\begin{theorem}
The sequence space $c_{0}\left( u,v,\Delta \right) $ has $AD$ property
whenever $u\in c_{0}\left( u,v,\Delta \right) $.
\end{theorem}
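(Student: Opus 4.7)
The plan is to apply the Hahn--Banach-type criterion quoted in the preliminaries: $\varphi$ is dense in $c_0(u,v,\Delta)$ (that is, $c_0(u,v,\Delta)$ has the $AD$ property) provided every $f\in c_0(u,v,\Delta)'$ with $f(e^{(k)})=0$ for all $k\in N$ is identically zero. The hypothesis $u\in c_0(u,v,\Delta)$ is used first to ensure that each $e^{(k)}$ actually lies in $c_0(u,v,\Delta)$, since the tail of $G(u,v,\Delta)e^{(k)}$ is $u_n(v_k-v_{k+1})$, whose decay is governed by the same mechanism that makes $(G(u,v,\Delta)u)_n\to 0$.

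Since $G:=G(u,v,\Delta)$ is a triangle, the excerpt lets me write every $f\in c_0(u,v,\Delta)'$ as $f=g\circ G$ with $g\in c_0'$. Using $c_0'\cong \ell_1$, there is $a=(a_n)\in \ell_1$ such that $f(x)=\sum_n a_n(Gx)_n$. A direct computation of $Ge^{(k)}$, using that $\Delta e^{(k)}$ is supported on $\{k,k+1\}$, gives $(Ge^{(k)})_k=u_k v_k$, $(Ge^{(k)})_n=u_n(v_k-v_{k+1})$ for $n>k$, and $0$ for $n<k$. Substituting into $f(e^{(k)})=0$ yields the system
\begin{equation*}
a_k u_k v_k + (v_k-v_{k+1})\sum_{n>k} a_n u_n = 0, \qquad k\in N.
\end{equation*}

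Setting $S_k=\sum_{n\ge k}a_n u_n$, which is well defined because $a\in \ell_1$ and $u$ is forced to be bounded by the hypothesis, the identity $a_k u_k = S_k - S_{k+1}$ collapses the above to the clean telescopic relation $v_k S_k = v_{k+1} S_{k+1}$. Hence $v_k S_k$ is constant in $k$. Since $S_k$ is the tail of an absolutely convergent series, $S_k\to 0$, which forces this constant to be $0$. Therefore $S_k\equiv 0$, so $a_k u_k = S_k-S_{k+1}=0$, and since $u_k\neq 0$ ($u\in U$), I conclude $a=0$, and thus $f\equiv 0$.

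The main obstacle I expect is the dual role of the hypothesis $u\in c_0(u,v,\Delta)$: it must supply both that $e^{(k)}\in c_0(u,v,\Delta)$ (so the vanishing test is even meaningful) and the tame behavior of $u$ needed for $S_k$ to be well defined and to tend to $0$. Both reduce to extracting from $(G(u,v,\Delta)u)_n\to 0$ that $u_n\to 0$, which is where I would spend the most care in writing the argument up rigorously.
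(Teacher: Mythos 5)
Your route is the paper's route: the Hahn--Banach fundamental-set criterion, the representation $f=g\circ G(u,v,\Delta)$ with $g\in c_{0}'\cong\ell_{1}$, and the reduction to showing that $a\in\ell_{1}$ with $G(u,v,\Delta)^{T}a=0$ forces $a=0$; you go further than the paper in that you actually compute the transpose equations and telescope them to $v_{k}S_{k}=v_{k+1}S_{k+1}$, whereas the paper merely asserts that the null space of $G'(u,v,\Delta)$ is trivial. However, two of your steps are genuine gaps. First, the extraction you defer --- ``$(G(u,v,\Delta)u)_{n}\to0$ implies $u_{n}\to0$'' --- is simply false: since $(G(u,v,\Delta)u)_{n}=u_{n}\sum_{i=0}^{n}v_{i}\Delta u_{i}$, take $u_{n}=(-1)^{n}$, $v_{0}=1$ and $v_{i}=(-1)^{i+1}2^{-i-1}$ for $i\geq1$; then $\sum_{i=0}^{n}v_{i}\Delta u_{i}=2^{-n}$, so $u\in c_{0}(u,v,\Delta)$ although $u\notin c_{0}$, and then $(G(u,v,\Delta)e^{(k)})_{n}=u_{n}(v_{k}-v_{k+1})$ for $n>k$ does not tend to $0$, so $e^{(k)}\notin c_{0}(u,v,\Delta)$ and the vanishing test never gets started. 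What is actually needed (and what the paper's proof silently substitutes for the stated hypothesis) is $u\in c_{0}$; it cannot be derived from $u\in c_{0}(u,v,\Delta)$, and the boundedness of $u$ that you invoke for the convergence of $S_{k}$ is not a consequence of the stated hypothesis either.

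Second, and more seriously, the step ``$v_{k}S_{k}$ is constant and $S_{k}\to0$, hence the constant is $0$'' does not follow: if $\left\vert v_{k}\right\vert\to\infty$, then $S_{k}=c/v_{k}\to0$ is perfectly compatible with $c\neq0$. This is not a removable technicality, because the kernel of the transpose on $\ell_{1}$ can indeed be nontrivial: take $v_{k}=e^{e^{k}}$, $u_{k}=1/(k+1)$ and $a_{k}=(k+1)\bigl(e^{-e^{k}}-e^{-e^{k+1}}\bigr)$; then $a\in\ell_{1}\setminus\{0\}$, $u_{k}a_{k}=1/v_{k}-1/v_{k+1}$, $S_{k}=1/v_{k}$, and $u_{k}v_{k}a_{k}+(v_{k}-v_{k+1})S_{k+1}=0$ for every $k$, i.e.\ exactly your system has a nonzero solution. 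Here $u\in c_{0}$ (so $\varphi\subset c_{0}(u,v,\Delta)$) and $G(u,v,\Delta)$ maps onto $c_{0}$, so $f(x)=\sum_{n}a_{n}(G(u,v,\Delta)x)_{n}$ is a continuous nonzero functional vanishing on $\varphi$; hence without an extra assumption on $v$ the desired conclusion itself fails, and no argument can close this gap. Your telescoping does close if one assumes, say, that $(v_{k})$ has a bounded subsequence (equivalently $1/v\notin c_{0}$), for then $v_{k}S_{k}=c$ together with $S_{k}\to0$ forces $c=0$, $S_{k}\equiv0$ and $a=0$. Note that the paper's own proof hides the same defect inside its unproved claim about the null space of $G'(u,v,\Delta)$; your write-up is more transparent about where the difficulty sits, but as it stands both highlighted steps are gaps, and the second cannot be repaired under the stated hypotheses.
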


\begin{proof}
Suppose that $g\in \left[ c_{0}\left( u,v,\Delta \right) \right] ^{\prime }.$
Then there exists a functional $f$ over the space $c_{0}$ such that $f\left(
x\right) =g\left( G\left( u.v,\Delta \right) x\right) $ for some $f\in
c_{0}^{^{\prime }}=\ell _{1}.$ Since $c_{0}$ has $AK-property$ and $%
c_{0}^{^{\prime }}\cong \ell _{1}$%
\begin{equation*}
f\left( x\right) =\sum\limits_{j=1}^{\infty
}a_{j}\sum\limits_{i=1}^{j}u_{j}\nabla v_{i}x_{i}
\end{equation*}
for some $a=\left( a_{j}\right) \in \ell _{1}.$ Since $u\in c_{0}$ by
hypothesis the inclusion $\varphi \subset c_{0}\left( u,v,\Delta \right) $
holds. For any $f\in c_{0}^{^{\prime }}$ and $e^{\left( k\right) }\in
\varphi \subset c_{0}\left( u,v,\Delta \right) ,$ 
\begin{equation*}
f\left( e^{\left( k\right) }\right) =\sum\limits_{j=1}^{\infty }a_{j}\left\{
G\left( u.v,\Delta \right) e^{\left( k\right) }\right\} _{j}=\left\{
G^{^{\prime }}\left( u.v,\Delta \right) a\right\} _{k}
\end{equation*}
where $G^{^{\prime }}\left( u.v,\Delta \right) $ is the transpose of the
matrix $G\left( u.v,\Delta \right) .$ Hence, from Hanh-Banach Theorem, $%
\varphi $ is dens in $c_{0}\left( u,v,\Delta \right) $ if and only if $%
G^{^{\prime }}\left( u.v,\Delta \right) a=\theta $ for $a\in \ell _{1}$
implies $a=\theta .$ Since null space of the operator $G^{^{\prime }}\left(
u.v,\Delta \right) $ on $w$ is $\left\{ \theta \right\} ,$ $c_{0}\left(
u,v,\Delta \right) $ has $AD$ property.
\end{proof}

For the sequence spaces $\lambda $ and $\mu $, define the set $S\left(
\lambda ,\mu \right) $ by 
\begin{equation}
S\left( \lambda ,\mu \right) =\left\{ z=\left( z_{k}\right) \in w:xz=\left(
x_{k}z_{k}\right) \in \mu \text{ for all }x\in \lambda \right\}  \label{3.5}
\end{equation}
With notation of $(3.5)$, the $\alpha -$, $\beta -$, and $\gamma -$ duals of
a sequence space $\lambda $, which are respectively denoted by $\lambda
^{\alpha }$, $\lambda ^{\beta }$, and $\lambda ^{\gamma }$ are defined in 
\cite{Garlin} by 
\begin{equation*}
\lambda ^{\alpha }=S\left( \lambda ,l_{1}\right) \text{, }\lambda ^{\beta
}=S\left( \lambda ,cs\right) \text{ and }\lambda ^{\gamma }=S\left( \lambda
,bs\right) \text{.}
\end{equation*}
We now need the following Lemmas due to Stieglitz and Tietz \cite{stiti} for
the proofs of theorems $5-6-7$.

\begin{lemma}
$A\in \left( c_{0}:l_{1}\right) $ if and only if

$\underset{K\in F}{\sup }\sum\limits_{n}\left\vert \sum\limits_{k\in
K}a_{nk}\right\vert <\infty ,$
\end{lemma}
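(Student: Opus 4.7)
The plan is to prove the equivalence by testing against characteristic-type sequences in the necessity direction and by a two-step sign-duality in the sufficiency direction. For \emph{necessity}, both $c_{0}$ and $\ell _{1}$ are $BK$-spaces, so any matrix transformation $A\colon c_{0}\to \ell _{1}$ is automatically continuous by the closed graph theorem, and there is a constant $M$ with $\|Ax\|_{\ell _{1}}\le M\|x\|_{\infty }$ on $c_{0}$. Testing on $\chi _{K}:=\sum _{k\in K}e^{(k)}$ for finite $K$, which lies in $c_{0}$ with unit sup-norm and has $(A\chi _{K})_{n}=\sum _{k\in K}a_{nk}$, gives $\sum _{n}\bigl|\sum _{k\in K}a_{nk}\bigr|\le M$, and the supremum over $K\in F$ yields the stated bound.

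For \emph{sufficiency}, set $M:=\sup _{K\in F}\sum _{n}\bigl|\sum _{k\in K}a_{nk}\bigr|<\infty $. I first check that $Ax$ makes sense on $c_{0}$: taking $K=\{k\}$ gives $\sum _{n}|a_{nk}|\le M$, and for each fixed $n$ a finite-truncation argument splitting the row by sign yields $\sum _{k}|a_{nk}|\le 2M$, so $(Ax)_{n}$ converges absolutely for every bounded $x$. To bound $\|Ax\|_{\ell _{1}}$ for $x\in c_{00}$, I apply $\ell _{1}$--$\ell _{\infty }$ duality twice. Picking signs $\varepsilon _{n}\in\{-1,1\}$ with $|(Ax)_{n}|=\varepsilon _{n}(Ax)_{n}$ and swapping the finite-in-$k$ and absolutely-convergent-in-$n$ sums yields
\begin{equation*}
\sum _{n}|(Ax)_{n}|=\sum _{k}x_{k}\sum _{n}\varepsilon _{n}a_{nk}\le \|x\|_{\infty }\sum _{k\in\operatorname{supp}(x)}\Bigl|\sum _{n}\varepsilon _{n}a_{nk}\Bigr|.
\end{equation*}
A second round of signing $\delta _{k}\in\{-1,1\}$ on the finite set $K:=\operatorname{supp}(x)$ and swapping back gives
\begin{equation*}
\sum _{k\in K}\Bigl|\sum _{n}\varepsilon _{n}a_{nk}\Bigr|=\sum _{n}\varepsilon _{n}\Bigl(\sum _{k\in K^{+}}a_{nk}-\sum _{k\in K^{-}}a_{nk}\Bigr)\le \sum _{n}\Bigl|\sum _{k\in K^{+}}a_{nk}\Bigr|+\sum _{n}\Bigl|\sum _{k\in K^{-}}a_{nk}\Bigr|\le 2M,
\end{equation*}
where $K^{\pm }:=\{k\in K:\delta _{k}=\pm 1\}$. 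Combining, $\|Ax\|_{\ell _{1}}\le 2M\|x\|_{\infty }$ on $c_{00}$, and truncation plus Fatou's lemma for counting measure extends the bound to all $x\in c_{0}$.

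The main obstacle will be the bookkeeping around the two sign-and-swap interchanges: for a generic bounded $x$ the double series on $\mathbb{N}\times\mathbb{N}$ need not be absolutely summable, so both swaps must be carried out on finite truncations followed by monotone passages to the limit. The two-stage duality is the heart of the argument; the other steps reduce to standard $FK$-space facts and routine verifications that the sign-and-swap maneuvers are legitimate.
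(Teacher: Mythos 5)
Your proof is correct. Note that the paper itself gives no proof of this lemma at all: it is quoted as a known characterization from the Stieglitz--Tietz survey and used as a black box in Theorems 5--7, so the comparison is with the classical literature rather than with an argument in the text. Your route is the standard functional-analytic one and is carried out soundly: necessity follows from automatic continuity of a matrix map between the $BK$-spaces $c_{0}$ and $\ell_{1}$, tested on the sequences $\chi_{K}=\sum_{k\in K}e^{(k)}$ (take $K\neq\emptyset$ so that $\|\chi_{K}\|_{\infty}=1$; the empty set contributes $0$ anyway); sufficiency follows from the column bound $\sum_{n}|a_{nk}|\le M$, the row bound $\sum_{k}|a_{nk}|\le 2M$ obtained by splitting a finite block of a row by sign (valid for real entries, as in this paper; for complex entries the constant becomes $4M$, which is harmless since only finiteness matters), the two sign-and-swap estimates on finitely supported $x$ (legitimate because the $k$-sums are finite and the $n$-series converge absolutely, so Fubini applies), and Fatou's lemma for counting measure to pass from $c_{00}$ to $c_{0}$. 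Two remarks: your final step in fact proves more, since the pointwise convergence $(Ax^{[m]})_{n}\to(Ax)_{n}$ needs only bounded $x$, so the same condition characterizes $(\ell_{\infty}:\ell_{1})$ and $(c:\ell_{1})$ as well, which is the form in which the result is usually tabulated; and your argument makes the quantitative bound $\|Ax\|_{\ell_{1}}\le 2\sup_{K\in F}\sum_{n}\bigl|\sum_{k\in K}a_{nk}\bigr|\,\|x\|_{\infty}$ explicit, something the paper's citation of course does not provide. What the citation buys is brevity; what your argument buys is a self-contained, elementary proof with an explicit norm estimate.
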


\begin{lemma}
$A\in \left( c_{0}:c\right) $ if and only if\ 

$\ \underset{n}{\sup }\sum\limits_{k}\left\vert a_{nk}\right\vert <\infty ,$

$\underset{n\rightarrow \infty }{\lim }a_{nk}-\alpha _{k}=0.$
\end{lemma}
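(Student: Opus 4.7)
The plan is to prove the two implications separately using standard Banach-space arguments. Throughout I write $f_n(x) := \sum_k a_{nk} x_k$ whenever the series converges.

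For necessity, suppose $A \in (c_0 : c)$. Since $Ax$ is defined for every $x \in c_0$, each row $(a_{nk})_k$ must lie in $c_0^{\beta} = \ell_1$, so $f_n$ is a continuous linear functional on $c_0$ with $\|f_n\|_{c_0'} = \sum_k |a_{nk}|$ under the canonical identification $c_0' \cong \ell_1$. Condition (ii) follows by testing on the coordinate vector $e^{(k)} \in c_0$: $A e^{(k)} = (a_{nk})_n$ must lie in $c$, so $\alpha_k := \lim_n a_{nk}$ exists. For (i), note that $(f_n(x))_n = ((Ax)_n)_n$ is convergent, hence bounded, for every fixed $x \in c_0$, so the family $\{f_n\}$ is pointwise bounded on the Banach space $c_0$; the uniform boundedness principle then yields $\sup_n \|f_n\| < \infty$, which is precisely (i).

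For sufficiency, assume (i) and (ii), and set $M := \sup_n \sum_k |a_{nk}|$. A standard $\liminf$ argument on the partial sums $\sum_{k=0}^K |a_{nk}|$ shows $\sum_{k=0}^K |\alpha_k| \leq M$ for every $K$, so $\alpha = (\alpha_k) \in \ell_1$ and $\sum_k \alpha_k x_k$ converges absolutely for each $x \in c_0$. Fix such an $x$ and $\varepsilon > 0$. For any cutoff $K$,
\begin{equation*}
\Bigl| (Ax)_n - \sum_k \alpha_k x_k \Bigr| \leq \sum_{k=0}^{K} |a_{nk} - \alpha_k|\,|x_k| + \sum_{k>K} (|a_{nk}| + |\alpha_k|)\,|x_k|.
\end{equation*}
Since $x \in c_0$, choose $K$ large so that $|x_k| < \varepsilon$ for $k > K$; this bounds the second sum by $2M\varepsilon$. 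For this fixed $K$, letting $n \to \infty$ and invoking (ii) term-by-term makes the first (finite) sum arbitrarily small. Hence $(Ax)_n \to \sum_k \alpha_k x_k$, so $Ax \in c$ and $A \in (c_0 : c)$.

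The main obstacle is this sufficiency estimate: one has to exchange the double limit $\lim_n \sum_k a_{nk} x_k$ with $\sum_k \lim_n a_{nk} x_k$, and the $K$/$n$ splitting combined with the uniform bound $M$ from (i) is the standard device that makes this legitimate. Necessity, by contrast, is essentially a one-line appeal to the uniform boundedness principle once each row is recognised as an element of $c_0' \cong \ell_1$; the only subtlety there is to first observe that $A \in (c_0:c)$ forces each row to lie in $\ell_1$ (so that $f_n$ is a bona fide bounded functional) before the Banach--Steinhaus step can be applied.
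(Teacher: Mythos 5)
Your proof is correct. Note, however, that the paper does not prove this lemma at all: it is quoted as a known result of Stieglitz and Tietz (their compendium of matrix-transformation characterizations) and is used as a black box in the duality and matrix-mapping theorems, so there is no in-paper argument to compare yours against. What you have written is the classical proof of the Kojima--Schur-type characterization of $(c_0:c)$: necessity via the identification $c_0'\cong\ell_1$, testing on $e^{(k)}$, and the uniform boundedness principle; sufficiency via the $\liminf$ bound giving $(\alpha_k)\in\ell_1$ and the cutoff-at-$K$ splitting that justifies interchanging $\lim_n$ with $\sum_k$. The one step you assert rather than prove is that $A\in(c_0:c)$ forces each row into $c_0^\beta=\ell_1$; this is standard (e.g.\ apply Banach--Steinhaus to the partial-sum functionals $x\mapsto\sum_{k\le K}a_{nk}x_k$, which converge pointwise on $c_0$), and since you explicitly flag it as the needed preliminary observation, the argument is complete in substance and would serve as a self-contained replacement for the citation.
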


\begin{lemma}
$A\in \left( c_{0}:\ell _{\infty }\right) $ if and only if

$\underset{n}{\sup }\sum\limits_{k}\left| a_{nk}\right| <\infty .$
\end{lemma}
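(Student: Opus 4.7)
The plan is to treat this as a standard application of the uniform boundedness principle together with the characterization of the $\beta$-dual (equivalently, the continuous dual) of $c_0$. The statement is an ``if and only if'', so I would handle the two implications separately.

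For the sufficiency direction, I would assume $M:=\sup_n \sum_k |a_{nk}| < \infty$. Fix any $x=(x_k) \in c_0$; since $c_0 \subset \ell_\infty$, we have $\|x\|_\infty < \infty$. For each fixed $n$, the row sum $\sum_k |a_{nk}| \le M$ is finite, so $\sum_k a_{nk} x_k$ converges absolutely with
\begin{equation*}
|(Ax)_n| \;\le\; \sum_k |a_{nk}|\,|x_k| \;\le\; \|x\|_\infty \sum_k |a_{nk}| \;\le\; M\,\|x\|_\infty,
\end{equation*}
so $Ax \in \ell_\infty$. This direction is completely routine.

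For the necessity direction, assume $A:c_0 \to \ell_\infty$. The first step is to show each row lies in $\ell_1$. For a fixed $n\in N$, the series $\sum_k a_{nk} x_k$ must converge for every $x\in c_0$ (otherwise $(Ax)_n$ is undefined), so the row $(a_{nk})_{k\in N}$ belongs to the $\beta$-dual $c_0^{\beta}=\ell_1$. Define the linear functional $f_n:c_0\to\mathbb{C}$ by $f_n(x)=(Ax)_n=\sum_k a_{nk}x_k$; the classical identification $c_0'\cong \ell_1$ gives $\|f_n\| = \sum_k |a_{nk}|$. Next, for every $x\in c_0$ we have $Ax\in \ell_\infty$, so
\begin{equation*}
\sup_n |f_n(x)| \;=\; \sup_n |(Ax)_n| \;=\; \|Ax\|_\infty \;<\; \infty.
\end{equation*}
Thus the family $\{f_n\}_{n\in N}$ is pointwise bounded on the Banach space $c_0$, and the Banach--Steinhaus theorem yields $\sup_n \|f_n\| < \infty$, which is exactly $\sup_n \sum_k |a_{nk}| < \infty$.

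The main obstacle, if any, is the necessity direction, specifically the justification that each row is summable in modulus. This is not automatic from $A:c_0\to \ell_\infty$ alone unless one invokes that a sequence $(a_{nk})_k$ for which $\sum_k a_{nk}x_k$ converges on all of $c_0$ must be in $\ell_1$ (i.e.\ $c_0^{\beta}=\ell_1$). Once this is granted, the dual-norm formula and Banach--Steinhaus are essentially mechanical. The sufficiency direction has no real obstacle; the only subtlety is ensuring the absolute convergence of each $\sum_k a_{nk}x_k$, which follows immediately from the hypothesis that each row sum is finite.
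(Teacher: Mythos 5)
Your proposal is correct, but note that the paper does not prove this lemma at all: it is quoted as a known result of Stieglitz and Tietz, listed together with Lemmas 1 and 2 as tools for computing the duals and for the matrix characterizations. So there is no in-paper argument to compare with; what you have written is the standard functional-analytic proof of the classical Schur/Toeplitz-type condition for $(c_0:\ell_\infty)$. Your sufficiency direction is routine and fine. In the necessity direction the two ingredients you flag are exactly the ones needed: that convergence of $\sum_k a_{nk}x_k$ for all $x\in c_0$ forces the row $(a_{nk})_k$ into $c_0^{\beta}=\ell_1$ (provable by a sign-choice/gliding-hump argument, or by applying Banach--Steinhaus to the partial-sum functionals $x\mapsto\sum_{k\le m}a_{nk}x_k$, which are automatically bounded), and the dual-norm identity $\|f_n\|=\sum_k|a_{nk}|$ from $c_0'\cong\ell_1$; after that, pointwise boundedness of $\{f_n\}$ on the Banach space $c_0$ plus Banach--Steinhaus gives $\sup_n\sum_k|a_{nk}|<\infty$. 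This is exactly how the cited result is usually established, so there is no gap, only the (acknowledged) reliance on the identification $c_0^{\beta}=\ell_1$, which is itself classical.
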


\begin{theorem}
Let $u$, $v\in U$, $a=\left( a_{k}\right) \in w$\ and the matrix $B=\left(
b_{nk}\right) $ by 
\begin{equation*}
b_{nk}=\left\{ 
\begin{array}{cc}
\left( \frac{1}{u_{n}v_{k}}-\frac{1}{u_{n}v_{k+1}}\right) a_{k} & \left(
0\leq k\leq n\right) \text{,} \\ 
\frac{1}{u_{n}v_{n}}a_{n} & \left( k=n\right) \text{,} \\ 
0 & \left( k>n\right) \text{.}%
\end{array}
\right.
\end{equation*}
for all $k,$ $n\in N.$ Then the $\alpha -$ dual of the \ space $\lambda
\left( u,v;\Delta \right) $\ is the set 
\begin{equation*}
b_{\Delta }=\left\{ a=\left( a_{k}\right) \in w:\underset{K\in F}{\sup }%
\sum\limits_{n}\left| \sum\limits_{k\in K}\sum\limits_{i=0}^{k-1}\frac{1}{%
u_{k}}\left( \frac{1}{v_{i}}-\frac{1}{v_{i+1}}\right) a_{k}+\frac{1}{%
u_{k}v_{k}}a_{k}\right| <\infty \right\} \text{.}
\end{equation*}
\end{theorem}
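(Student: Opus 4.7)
The plan is to reduce the computation of the $\alpha$-dual to a matrix-class membership problem via the isomorphism of Theorem 3, and then to close the argument with Lemma 1.

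First, I would unfold the definition: $a=(a_{k})\in \lambda (u,v,\Delta)^{\alpha }$ means precisely that $(a_{k}x_{k})\in \ell _{1}$ for every $x\in \lambda (u,v,\Delta )$. Theorem 3 exhibits every such $x$ as the image under $(3.4)$ of a unique $y\in \lambda $, namely
\begin{equation*}
x_{k}=\sum_{i=0}^{k-1}\frac{1}{u_{k}}\left( \frac{1}{v_{i}}-\frac{1}{v_{i+1}}\right) y_{i}+\frac{1}{u_{k}v_{k}}y_{k}.
\end{equation*}
Multiplying through by $a_{k}$ identifies the sequence $(a_{k}x_{k})_{k\in N}$ with $By$, where $B=(b_{nk})$ is precisely the matrix constructed in the statement. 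Consequently
\begin{equation*}
a\in \lambda (u,v,\Delta )^{\alpha }\text{ if and only if }By\in \ell _{1}\text{ for every }y\in \lambda ,
\end{equation*}
that is, $a$ belongs to the $\alpha$-dual if and only if $B\in (\lambda :\ell _{1})$.

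Second, since the classical $\alpha$-duals satisfy $\ell _{\infty }^{\alpha }=c^{\alpha }=c_{0}^{\alpha }=\ell _{1}$, the three cases $\lambda \in \{\ell _{\infty },c,c_{0}\}$ collapse to one and it suffices to treat $\lambda =c_{0}$. For this case Lemma 1 characterises $B\in (c_{0}:\ell _{1})$ by
\begin{equation*}
\sup_{K\in F}\sum_{n}\left\vert \sum_{k\in K}b_{nk}\right\vert <\infty .
\end{equation*}
Substituting the three-line formula for $b_{nk}$ into this condition gives exactly the defining inequality of the set $b_{\Delta }$, completing the identification.

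The proof is essentially structural, but a careful bookkeeping step is unavoidable: one must carry the piecewise definition of $b_{nk}$ (the split between $k<n$ and $k=n$, and the role of the factor $a_{k}$) faithfully through the identification $By=ax$, and then through the collection of the inner sum $\sum _{k\in K}b_{nk}$ arising in Lemma 1. This index juggling is the main, if modest, obstacle; everything else follows formally from the isomorphism of Theorem 3 and the Stieglitz--Tietz criterion.
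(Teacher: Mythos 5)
Your proposal is correct and takes essentially the same route as the paper: invert the $G(u,v,\Delta)$-transform via $(3.4)$, identify $(a_kx_k)_{k}$ with $By$ for the stated matrix $B$, and conclude with Lemma 1. The only caveat is that the collapse of the three cases $\lambda\in\{\ell_\infty,c,c_0\}$ really rests on the fact that $(c_0:\ell_1)$, $(c:\ell_1)$ and $(\ell_\infty:\ell_1)$ are characterized by the same Stieglitz--Tietz condition, not on the coincidence $\ell_\infty^{\alpha}=c^{\alpha}=c_0^{\alpha}=\ell_1$ by itself --- a point the paper glosses over in exactly the same way.
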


\begin{proof}
Let $a=\left( a_{k}\right) \in w$ and consider the matrix $G^{-1}\left(
u.v,\Delta \right) =\Delta ^{-1}G^{-1}\left( u,v\right) $ and sequence $%
a=\left( a_{k}\right) $. Bearing in mind the relation $(3.1)$, we
immediately derive that 
\begin{eqnarray}
a_{k}x_{k} &=&\sum\limits_{i=0}^{k-1}\frac{1}{u_{k}}\left( \frac{1}{v_{i}}-%
\frac{1}{v_{i+1}}\right) y_{i}a_{k}+\frac{1}{u_{k}v_{k}}y_{k}a_{k}
\label{3.6} \\[1pt]
&=&\sum\limits_{i=0}^{k-1}\frac{1}{u_{k}}\left( \frac{1}{v_{i}}-\frac{1}{%
v_{i+1}}\right) a_{i}y_{k}+\frac{1}{u_{k}v_{k}}y_{k}a_{k}=\left( By\right)
_{k}  \notag
\end{eqnarray}
($i,k\in N$). We therefore observe by $(3.6)$ that $ax=\left(
a_{n}x_{n}\right) \in \ell _{1}$ whenever $x\in c_{0}\left( u,v,\Delta
\right) $, $c\left( u,v,\Delta \right) $ and $\ell _{\infty }\left(
u,v,\Delta \right) $ if and only if $By\in \ell _{1}$ whenever $y\in \left\{
\ell _{\infty },c,c_{0}\right\} $. Then , we derive by Lemma $1$ that 
\begin{equation*}
\underset{K\in F}{\sup }\sum\limits_{n}\left| \sum\limits_{k\in
K}\sum\limits_{i=0}^{k-1}\frac{1}{u_{k}}\left( \frac{1}{v_{i}}-\frac{1}{%
v_{i+1}}\right) a_{k}+\frac{1}{u_{k}v_{k}}a_{k}\right| <\infty
\end{equation*}
which yields the consequence that $\left[ c_{0}\left( u,v,\Delta \right) %
\right] ^{\alpha }=\left[ c\left( u,v,\Delta \right) \right] ^{\alpha }=%
\left[ \ell _{\infty }\left( u,v,\Delta \right) \right] ^{\alpha }=b_{\Delta
}$.
\end{proof}

\begin{theorem}
Let $u$, $v\in U$, $a=\left( a_{k}\right) \in w$ and the matrix $C=\left(
c_{nk}\right) $ by 
\begin{equation*}
c_{nk}=\left\{ 
\begin{array}{cc}
\left( \frac{1}{u_{n}v_{k}}-\frac{1}{u_{n}v_{k+1}}\right) a_{k} & \left(
0\leq k<n\right) \text{,} \\ 
\frac{1}{u_{n}v_{n}}a_{n} & \left( k=n\right) \text{,} \\ 
0 & \left( k>n\right) \text{.}%
\end{array}%
\right.
\end{equation*}%
and define the sets $c_{1}$,$c_{2}$,$c_{3}$,$c_{4}$ by 
\begin{eqnarray*}
c_{1} &=&\left\{ a=\left( a_{k}\right) \in w:\underset{n}{\sup }%
\sum\limits_{n}\left\vert c_{nk}\right\vert <\infty \right\} ;\text{\ }%
c_{2}=\left\{ a=\left( a_{k}\right) \in w:\underset{n\rightarrow \infty }{%
\lim }c_{nk}\text{\ exists for each }k\in N\right\} ; \\
c_{3} &=&\left\{ a=\left( a_{k}\right) \in w:\underset{n\rightarrow \infty }{%
\lim }\sum\limits_{k}\left\vert c_{nk}\right\vert =\sum\limits_{k}\left\vert 
\underset{n\rightarrow \infty }{\lim }c_{nk}\right\vert \right\} ;\text{ }%
c_{4}=\left\{ a=\left( a_{k}\right) \in w:\underset{n\rightarrow \infty }{%
\lim }\sum\limits_{k}c_{nk}\text{ exists }\right\} \text{.}
\end{eqnarray*}%
Then, $\left[ c_{0}\left( u,v,\Delta \right) \right] ^{\beta }$,$\left[
c\left( u,v,\Delta \right) \right] ^{\beta }$and $\left[ \ell _{\infty
}\left( u,v,\Delta \right) \right] ^{\beta }$ is $c_{1}\cap c_{2}$, $%
c_{1}\cap c_{2}\cap c_{4}$ and $c_{2}\cap c_{3}$ respectively.
\end{theorem}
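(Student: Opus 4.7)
\medskip

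\noindent\textbf{Proof proposal.} The plan is to reduce the $\beta$-dual question to a matrix-mapping problem of the form $C\in(\lambda:c)$ and then invoke the appropriate Stieglitz--Tietz type characterizations. First I would use the inversion formula $(3.4)$, which expresses $x_{k}$ in terms of $y=G(u,v,\Delta)x$ for any $x\in\lambda(u,v,\Delta)$; substituting this into the partial sum $\sum_{k=0}^{n}a_{k}x_{k}$ and interchanging the (finite) order of summation, I would rewrite these partial sums in the form $(Cy)_{n}=\sum_{k}c_{nk}y_{k}$, where $C=(c_{nk})$ is the triangle displayed in the statement. The linear isomorphism $T$ of Theorem 3 then translates the $\beta$-dual question exactly: a sequence $a$ lies in $[\lambda(u,v,\Delta)]^{\beta}$ iff $\sum_{k}a_{k}x_{k}$ converges for every $x\in\lambda(u,v,\Delta)$, iff $((Cy)_{n})_{n\in N}\in c$ for every $y\in\lambda$, i.e.\ $C\in(\lambda:c)$.

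Next I would apply the classical characterizations of $(\lambda:c)$ for $\lambda\in\{c_{0},c,\ell_{\infty}\}$ to this specific matrix $C$. For $\lambda=c_{0}$, Lemma 2 directly yields the conditions $\sup_{n}\sum_{k}|c_{nk}|<\infty$ and existence of $\lim_{n}c_{nk}$ for each $k$, which are exactly the defining conditions of $c_{1}$ and $c_{2}$; hence $[c_{0}(u,v,\Delta)]^{\beta}=c_{1}\cap c_{2}$. For $\lambda=c$, the standard characterization of $(c:c)$ imposes, in addition to those two, the requirement that $\lim_{n}\sum_{k}c_{nk}$ exist (obtained by testing the matrix on the constant sequence $e$), which is the defining condition of $c_{4}$; so $[c(u,v,\Delta)]^{\beta}=c_{1}\cap c_{2}\cap c_{4}$. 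For $\lambda=\ell_{\infty}$, I would invoke the Schur-type characterization of $(\ell_{\infty}:c)$: the column-limit condition $c_{2}$ together with the uniform convergence condition $c_{3}$ (which already encodes the required boundedness, so no separate $c_{1}$ condition appears), producing $[\ell_{\infty}(u,v,\Delta)]^{\beta}=c_{2}\cap c_{3}$.

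The main obstacle will be justifying the interchange of summation in the first step carefully enough to ensure that the matrix $C$ written down in the statement actually records the partial sums $\sum_{k=0}^{n}a_{k}x_{k}$ as a $C$-transform of $y$; any miscounting at the diagonal term $k=n$ or at the boundary row-sum indices would break the identification. A secondary obstacle is the invocation of the correct $(\ell_{\infty}:c)$ criterion, since only the $(c_{0}:c)$ version is stated explicitly as Lemma 2 in the excerpt; the $(c:c)$ and $(\ell_{\infty}:c)$ analogues of Stieglitz--Tietz must be cited (or proved in passing) to complete the remaining two cases.
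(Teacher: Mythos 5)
Your proposal follows essentially the same route as the paper: substitute the inversion formula $(3.4)$ into the partial sums $\sum_{k=0}^{n}a_{k}x_{k}$, interchange the finite summations to identify them with $(Cy)_{n}$ via the isomorphism of Theorem 3, and then apply the Stieglitz--Tietz characterizations of $(c_{0}:c)$, $(c:c)$ and $(\ell_{\infty}:c)$. The paper writes out only the $c_{0}$ case and declares the other two analogous, so your explicit listing of the extra conditions $c_{4}$ (for $c$) and $c_{2}\cap c_{3}$ (for $\ell_{\infty}$) is simply a fuller account of the same argument.
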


\begin{proof}
We only give the proof the space $c_{0}\left( u,v,\Delta \right) $. Since
the proof may be obtained by the same way for the spaces $c\left( u,v,\Delta
\right) $ and $\ell _{\infty }\left( u,v,\Delta \right) $ Consider the
equation 
\begin{eqnarray}
\sum\limits_{i=0}^{n}a_{k}x_{k} &=&\sum\limits_{i=0}^{n}\left[
\sum\limits_{i=0}^{k-1}\frac{1}{u_{k}}\left( \frac{1}{v_{i}}-\frac{1}{v_{i+1}%
}\right) y_{i}+\frac{1}{u_{k}v_{k}}y_{k}\right] a_{k}  \label{3.7} \\
&=&\sum\limits_{i=0}^{n}\left[ \sum\limits_{i=0}^{n}\frac{1}{u_{k}}\left( 
\frac{1}{v_{i}}-\frac{1}{v_{i+1}}\right) a_{i}+\frac{1}{u_{k}v_{k}}a_{k}%
\right] y_{k}  \notag \\
&=&\left( Cy\right) _{n}\text{.}  \notag
\end{eqnarray}%
Thus , we deduce from lemma $2$ and $\left( 3.7\right) $ that $ax=\left(
a_{n}x_{n}\right) \in cs$\ whenever $x\in c_{0}\left( u,v,\Delta \right) $
if and only if $Cy\in c$ whenever $y\in c_{0}.$ Therefore we derive by lemma 
$2$. which shows that $\left\{ c_{0}\left( u,v,\Delta \right) \right\}
^{\beta }=c_{1}\cap c_{2}$.
\end{proof}

\begin{theorem}
The $\gamma -$dual of the $c_{0}\left( u,v,\Delta \right) $, $c\left(
u,v,\Delta \right) $ and $\ell _{\infty }\left( u,v,\Delta \right) $ is the
set $c_{1}$.

\begin{proof}
This may be obtained in the similar way used in the proof of \ Theorem 6
with Lemma 3 instead of Lemma 2. So, we omit the detail .
\end{proof}
\end{theorem}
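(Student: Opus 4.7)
The plan is to mimic the argument used for Theorem 6, replacing the target space $cs$ with $bs$ and consequently invoking Lemma 3 instead of Lemma 2. The key identity is the one already established in equation $(3.7)$:
\begin{equation*}
\sum_{k=0}^{n} a_{k} x_{k} \;=\; (Cy)_{n},
\end{equation*}
where $y = G(u,v,\Delta)x$ is the transform of $x$ via the relation $(3.1)$, and the matrix $C$ is the one defined in the statement of Theorem 6. This identity is purely algebraic and does not depend on which of the three spaces $x$ belongs to.

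First, I would recall the definition $\lambda^{\gamma} = S(\lambda, bs)$, so that $a \in [c_0(u,v,\Delta)]^{\gamma}$ amounts to demanding that the partial sum sequence $\bigl(\sum_{k=0}^n a_k x_k\bigr)_{n}$ be bounded for every $x \in c_0(u,v,\Delta)$. Using $(3.7)$, and the fact that $x \mapsto y$ is a bijection between $c_0(u,v,\Delta)$ and $c_0$ (Theorem 3), this is equivalent to requiring $Cy \in \ell_{\infty}$ for every $y \in c_0$, i.e.\ to the matrix class membership $C \in (c_0 : \ell_{\infty})$. Lemma 3 then translates this directly into the condition $\sup_n \sum_k |c_{nk}| < \infty$, which is exactly the defining condition of $c_1$. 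Hence $[c_0(u,v,\Delta)]^{\gamma} = c_1$.

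For the spaces $c(u,v,\Delta)$ and $\ell_{\infty}(u,v,\Delta)$, the same identity $(3.7)$ reduces the question to $C \in (c : \ell_{\infty})$ and $C \in (\ell_{\infty} : \ell_{\infty})$, respectively. The standard characterizations of these two matrix classes (due to Stieglitz and Tietz, in the same reference as Lemmas 1--3) coincide with the characterization of $(c_0 : \ell_{\infty})$, namely $\sup_n \sum_k |c_{nk}| < \infty$. Therefore all three $\gamma$-duals collapse to $c_1$.

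I do not foresee any serious obstacle: since $(3.7)$ has already been written down and the three relevant matrix classes share the same characterization, the proof is essentially a one-line application of Lemma 3 together with the observation about $(c:\ell_{\infty})$ and $(\ell_{\infty}:\ell_{\infty})$. This is precisely why the authors only sketch it and omit the detail.
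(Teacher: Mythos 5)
Your proposal is correct and follows exactly the route the paper intends: repeat the reduction via the identity $(3.7)$, replacing the class $(c_0:c)$ of Lemma 2 by $(c_0:\ell_{\infty})$ of Lemma 3, so that the condition $\sup_n\sum_k|c_{nk}|<\infty$, i.e.\ membership in $c_1$, characterizes the $\gamma$-dual. Your added remark that $(c:\ell_{\infty})$ and $(\ell_{\infty}:\ell_{\infty})$ share the same characterization is precisely the detail the paper leaves implicit when treating the other two spaces.
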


\section{{\protect\Large Matrix Transformations on the Space }$c\left(
u,v,\Delta \right) $}

In this section, we directly prove the theorems which characterize the
classes $\left( c\left( u,v,\Delta \right) :\ell _{\infty }\right) $ and $%
A\in \left( c\left( u,v,\Delta \right) :c\right) $.

\begin{theorem}
$A\in \left( c\left( u,v,\Delta \right) :\ell _{\infty }\right) $ if and
only if 
\begin{equation}
\underset{n}{\sup }\sum\limits_{i=0}^{n}\left| \sum\limits_{i=0}^{k-1}\frac{1%
}{u_{k}}\left( \frac{1}{v_{i}}-\frac{1}{v_{i+1}}\right) a_{nk}+\frac{1}{%
u_{k}v_{k}}a_{nk}\right| <\infty  \label{4.1}
\end{equation}
\begin{equation}
\underset{n\rightarrow \infty }{\lim }\left[ \sum\limits_{i=0}^{k-1}\frac{1}{%
u_{k}}\left( \frac{1}{v_{i}}-\frac{1}{v_{i+1}}\right) a_{nk}+\frac{1}{%
u_{k}v_{k}}a_{nk}\right]  \label{4.2}
\end{equation}
exists for all $k$,$n\in N$. 
\begin{equation}
\underset{n\in N}{\sup }\sum\limits_{k=0}^{n}\left| \sum\limits_{i=0}^{k-1}%
\frac{1}{u_{k}}\left( \frac{1}{v_{i}}-\frac{1}{v_{i+1}}\right) a_{nk}+\frac{1%
}{u_{k}v_{k}}a_{nk}\right| <\infty ,(n\in N).  \label{4.3}
\end{equation}
\begin{equation}
\underset{n\rightarrow \infty }{\lim }\sum\limits_{k=0}^{n}\left[
\sum\limits_{i=0}^{k-1}\frac{1}{u_{k}}\left( \frac{1}{v_{i}}-\frac{1}{v_{i+1}%
}\right) a_{nk}+\frac{1}{u_{k}v_{k}}a_{nk}\right]  \label{4.4}
\end{equation}
\ exists for all $n\in N$.

\begin{proof}
Let $A\in \left( c\left( u,v,\Delta \right) :\ell _{\infty }\right) $. Then $%
Ax$ exists and is in $\ell _{\infty }$ for all $x\in c\left( u,v,\Delta
\right) $. Thus, since $\left\{ a_{nk}\right\} _{k\in N}\in \left\{ c\left(
u,v,\Delta \right) \right\} ^{\beta }$ for all $n\in N$, the necessities of
the conditions \ (4.2)-(4.4) are satisfy. Because of $Ax$ exists and is in $%
\ell _{\infty }$ for every $x\in c\left( u,v,\Delta \right) $.

Let us consider the equality 
\begin{equation*}
\sum\limits_{k=0}^{n}a_{nk}x_{k}=\sum\limits_{k=0}^{n}\left[
\sum\limits_{i=0}^{k-1}\frac{1}{u_{k}}\left( \frac{1}{v_{i}}-\frac{1}{v_{i+1}%
}\right) +\frac{1}{u_{k}v_{k}}\right] a_{nk}y_{k\text{ }}\text{;}\left( n\in
N\right)
\end{equation*}%
which yields us under our assumptions as $n\rightarrow \infty $\ that 
\begin{equation}
\sum\limits_{k=0}^{\infty }a_{nk}x_{k}=\sum\limits_{k=0}^{\infty }\left[
\sum\limits_{i=0}^{k-1}\frac{1}{u_{k}}\left( \frac{1}{v_{i}}-\frac{1}{v_{i+1}%
}\right) a_{nk}+\frac{1}{u_{k}v_{k}}a_{nk}\right] y_{k}\text{, \ \ \ }(n\in
N).  \label{4.5}
\end{equation}%
Therefore we get by (4.5) that 
\begin{eqnarray*}
\left\Vert Ax\right\Vert _{\infty } &\leq &\underset{n}{\sup }%
\sum\limits_{k}\left\vert \sum\limits_{i=0}^{k-1}\frac{1}{u_{k}}\left( \frac{%
1}{v_{i}}-\frac{1}{v_{i+1}}\right) a_{nk}+\frac{1}{u_{k}v_{k}}%
a_{nk}\right\vert \left\vert y_{k}\right\vert \\
&\leq &\left\Vert y\right\Vert _{\infty }\underset{n}{\sup }%
\sum\limits_{k}\left\vert \sum\limits_{i=0}^{k-1}\frac{1}{u_{k}}\left( \frac{%
1}{v_{i}}-\frac{1}{v_{i+1}}\right) a_{nk}+\frac{1}{u_{k}v_{k}}%
a_{nk}\right\vert <\infty .
\end{eqnarray*}%
This means that $Ax\in \ell _{\infty }$ whenever $x\in c\left( u,v,\Delta
\right) $ and this step completes the proof .
\end{proof}
\end{theorem}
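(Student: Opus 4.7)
The plan is to exploit the linear isomorphism $c(u,v,\Delta)\cong c$ established in Theorem 3 together with the description of the $\beta$-dual $[c(u,v,\Delta)]^{\beta}=c_{1}\cap c_{2}\cap c_{4}$ obtained in Theorem 6. Writing $y=G(u,v,\Delta)x$, inverting through formula $(3.4)$, and substituting into $\sum_{k}a_{nk}x_{k}$ formally yields the transformed matrix entries
\[
b_{nk}=\sum_{i=0}^{k-1}\frac{1}{u_{k}}\Bigl(\frac{1}{v_{i}}-\frac{1}{v_{i+1}}\Bigr)a_{nk}+\frac{1}{u_{k}v_{k}}a_{nk},
\]
so that $(Ax)_{n}=\sum_{k}b_{nk}y_{k}$ whenever this representation can be justified. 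In this way the problem reduces to the classical matrix class $(c:\ell_{\infty})$ applied to the matrix $B=(b_{nk})$.

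For the necessity direction, I would first observe that the mere existence of $Ax$ for every $x\in c(u,v,\Delta)$ forces each row $(a_{nk})_{k\in N}$ to lie in $[c(u,v,\Delta)]^{\beta}$. Reading off the three membership conditions defining $c_{1}\cap c_{2}\cap c_{4}$ in Theorem 6 immediately delivers $(4.2)$, $(4.3)$ and $(4.4)$. To obtain the uniform bound $(4.1)$, I would invoke the Banach--Steinhaus principle: since $c(u,v,\Delta)$ is a $BK$-space (Theorem 1) and each functional $x\mapsto(Ax)_{n}$ is continuous, their uniform boundedness transported through the isometric isomorphism $T:c(u,v,\Delta)\to c$ yields precisely the required supremum over $n$ of $\sum_{k}|b_{nk}|$.

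For sufficiency, assume $(4.1)$--$(4.4)$. Given $x\in c(u,v,\Delta)$, the image $y=G(u,v,\Delta)x$ lies in $c\subset\ell_{\infty}$. Using $(3.4)$ I would rewrite the finite partial sum $\sum_{k=0}^{n}a_{nk}x_{k}$ as $\sum_{k=0}^{n}b_{nk}y_{k}$ by an Abel-type rearrangement, then let $n\to\infty$: conditions $(4.3)$ and $(4.4)$ guarantee existence of the limits of the inner pieces, while $(4.1)$ secures absolute convergence of the outer series. This produces the identity $(4.5)$, after which the estimate
\[
|(Ax)_{n}|\le \|y\|_{\infty}\sum_{k}|b_{nk}|
\]
together with $(4.1)$ immediately gives $Ax\in\ell_{\infty}$.

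The main obstacle I anticipate is the careful justification of the interchange of summations when passing from $\sum_{k}a_{nk}x_{k}$ to the $y$-representation: for each fixed $n$ one has a genuine double series, and the rearrangement needs the absolute convergence afforded by $(4.3)$ together with the existence of the column-limits in $(4.2)$. Once the identity $(4.5)$ is rigorously in place, the remainder is a routine supremum estimate, and the characterization follows.
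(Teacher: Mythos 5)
Your argument follows the paper's own route: invert the transform via (3.4), pass from $A$ to the matrix $B=(b_{nk})$ acting on $y=G(u,v,\Delta)x\in c$, obtain the conditions (4.2)--(4.4) from the requirement that each row $(a_{nk})_{k\in N}$ lie in $\left[ c\left( u,v,\Delta\right)\right]^{\beta}$ (Theorem 6), and prove sufficiency through the representation (4.5) together with the estimate $\left\Vert Ax\right\Vert_{\infty}\leq\left\Vert y\right\Vert_{\infty}\sup_{n}\sum_{k}\left\vert b_{nk}\right\vert$. The only place you go beyond the paper is the explicit Banach--Steinhaus argument for the necessity of (4.1), a point the paper's proof leaves implicit; this makes one step more precise but is the same method, not a different approach.
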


We wish to give a lemma concerning the characterization of the class $A\in
\left( c_{0}:c\right) $ which is needed in proving the next theorem and due
to Stieglitz and Tietz\cite{stiti}.

\begin{lemma}
$A\in \left( c_{0}:c\right) $ if and only if the conditions of Lemma 2 with $%
\alpha _{k}=0$ for all $k$, and 
\begin{equation*}
\underset{n\rightarrow \infty }{\lim }\sum\limits_{k}a_{nk}=0\text{.}
\end{equation*}

\begin{theorem}
$A\in \left( c\left( u,v,\Delta \right) :c\right) $ if and only if
(4.1)-(4.4) hold, and 
\begin{equation}
\underset{n\rightarrow \infty }{\lim }\underset{k}{\sum }\sum%
\limits_{i=0}^{k-1}\frac{1}{u_{k}}\left( \frac{1}{v_{i}}-\frac{1}{v_{i+1}}%
\right) a_{nk}+\frac{1}{u_{k}v_{k}}a_{nk}=\alpha ,  \label{4.6}
\end{equation}%
\begin{equation}
\underset{n\rightarrow \infty }{\lim }\left( \sum\limits_{i=0}^{k-1}\frac{1}{%
u_{k}}\left( \frac{1}{v_{i}}-\frac{1}{v_{i+1}}\right) a_{nk}+\frac{1}{%
u_{k}v_{k}}a_{nk}\right) =\alpha _{k},(k\in N).  \label{4.7}
\end{equation}
\end{theorem}

\begin{proof}
Let $A\in \left( c\left( u,v,\Delta \right) :c\right) $. Since $c\subset
\ell _{\infty }$, the necessities of (4.1)-(4.4) are immediately obtain
Theorem 4.1. $Ax$ exists and is in $c$ for all $x\in c\left( u,v,\Delta
\right) $ by the hypothesis, so in necessities of the conditions (4.6) and
(4.7) are easily obtained with the sequences $x=\left( 1,2,3,...\right) $
and $x=b^{\left( i\right) }$, respectively; where $b^{\left( i\right) }$ is
defined by (1.2). Conversely, suppose that the conditions (4.1)-(4.4), (4.6)
and (4.7) hold and take any $x\in c\left( u,v,\Delta \right) $. Then $%
\left\{ a_{ni}\right\} _{i\in N}\in \left\{ c\left( u,v,\Delta \right)
\right\} ^{\beta }$ for each $n\in N$ which implies that $Ax$ exists. One
can derive by (4.1) and (4.7) that 
\begin{equation*}
\overset{s}{\underset{i=0}{\sum }}\left\vert \alpha _{i}\right\vert \leq 
\underset{n}{\sup }\sum\limits_{k=0}^{n}\left\vert \sum\limits_{i=0}^{k-1}%
\frac{1}{u_{k}}\left( \frac{1}{v_{i}}-\frac{1}{v_{i+1}}\right) a_{nk}+\frac{1%
}{u_{k}v_{k}}a_{nk}\right\vert <\infty
\end{equation*}%
holds every $s\in N$. This yields that $\left( \alpha _{i}\right) \in \ell
_{1}$ and hence the series $\sum \alpha _{i}y_{i}$ absolutely converges. Let
us consider the following equality obtained from (3.5) with $a_{ni}-\alpha
_{i}$ instead of $a_{ni}$%
\begin{equation}
\sum\limits_{k}\left( a_{nk}-\alpha _{k}\right)
x_{k}=\sum\limits_{k=0}^{\infty }\left[ \sum\limits_{i=0}^{k-1}\frac{1}{u_{k}%
}\left( \frac{1}{v_{i}}-\frac{1}{v_{i+1}}\right) a_{nk}+\frac{1}{u_{k}v_{k}}%
a_{nk}\right] \left( a_{nk}-\alpha _{i}\right) y_{k}.  \label{4.8}
\end{equation}%
Therefore we derive from lemma 2 with (4.8) that 
\begin{equation}
\underset{n\infty }{\lim }\sum\limits_{k}\left( a_{nk}-\alpha _{k}\right)
x_{k}=0\text{.}  \label{4.9}
\end{equation}%
Thus, we deduce by combining (4.9) with the fact $\left( \alpha
_{i}y_{i}\right) \in \ell _{1}$ that $Ax\in c$ and this step completes the
proof.
\end{proof}
\end{lemma}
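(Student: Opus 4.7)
The plan is to transfer the characterization to the classical space $c$ by using the isometric isomorphism $T:c(u,v,\Delta)\to c$, $x\mapsto y=G(u,v,\Delta)x$, established in Theorem 3, and then invoke the Kojima--Schur characterization of $(c:c)$. Write $\tilde a_{nk}:=\sum_{i=0}^{k-1}\frac{1}{u_k}\bigl(\frac{1}{v_i}-\frac{1}{v_{i+1}}\bigr)a_{nk}+\frac{1}{u_k v_k}a_{nk}$ for the bracketed coefficients appearing throughout the conditions. My first step is the identity
\begin{equation*}
\sum_{k=0}^{n}a_{nk}x_k=\sum_{k=0}^{n}\tilde a_{nk}\,y_k,\qquad n\in N,
\end{equation*}
obtained exactly as in the proof of Theorem 9 (equation (4.5)) by substituting the inverse formula (3.4). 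This reduces the membership $A\in(c(u,v,\Delta):c)$ to the membership of the derived matrix $\widetilde A=(\tilde a_{nk})$ in $(c:c)$.

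For necessity, assume $A\in(c(u,v,\Delta):c)$. Since $c\subset\ell_\infty$, Theorem 9 yields (4.1)--(4.4); in particular every row $\{a_{nk}\}_{k\in N}$ lies in $[c(u,v,\Delta)]^{\beta}$ so $(Ax)_n$ is well defined. To extract (4.6) I would feed the constant sequence $y=e\in c$ into the equivalent matrix problem; its $T^{-1}$-image is a specific $x\in c(u,v,\Delta)$, and the requirement $Ax\in c$ forces the existence of $\alpha=\lim_n\sum_k\tilde a_{nk}$. To extract (4.7) I would feed $y=e^{(k)}\in c_0\subset c$, which via Theorem 2 corresponds to $x=b^{(k)}\in c(u,v,\Delta)$; then $(Ax)_n=\tilde a_{nk}$, whose convergence in $n$ yields the pointwise limit $\alpha_k$.

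For sufficiency, fix $x\in c(u,v,\Delta)$ and let $y=Tx\in c$ with $l=\lim_k y_k$. Condition (4.3) together with the pointwise limits (4.7) forces $(\alpha_k)\in\ell_1$ by truncation, so $\sum_k\alpha_k(y_k-l)$ converges absolutely. I then decompose
\begin{equation*}
(Ax)_n=l\sum_{k}\tilde a_{nk}+\sum_{k}(\tilde a_{nk}-\alpha_k)(y_k-l)+\sum_{k}\alpha_k(y_k-l).
\end{equation*}
The first summand tends to $l\alpha$ by (4.6), and the third is a fixed finite number independent of $n$. For the middle summand I would apply Lemma 2 to the matrix $B=(\tilde a_{nk}-\alpha_k)$ acting on $z=y-l e\in c_0$: its column limits vanish by (4.7), and its row norms are uniformly bounded by (4.3) together with $(\alpha_k)\in\ell_1$, so $B\in(c_0:c)$ with zero column limits and therefore $\sum_k(\tilde a_{nk}-\alpha_k)z_k\to 0$. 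Assembling the three limits yields $(Ax)_n\to l\alpha+\sum_k\alpha_k(y_k-l)$, so $Ax\in c$.

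The hardest part will be the careful justification of the termwise limit passages in the sufficiency decomposition --- in particular, deducing $(\alpha_k)\in\ell_1$ from a finite-$s$ truncation of (4.3) combined with (4.7), and verifying that the middle sum vanishes uniformly in the tails so that Lemma 2 applies. Beyond that, the argument is a routine translation through $T$ and a direct appeal to Theorem 9 and Lemma 2.
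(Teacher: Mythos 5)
Your argument is correct, and its skeleton matches the paper's: reduce to the transformed matrix $\tilde a_{nk}$ via $T$, obtain (4.1)--(4.4) from Theorem 9, get (4.6)--(4.7) from test elements, and extract $(\alpha_k)\in\ell_1$ from (4.3) together with (4.7). Where you genuinely diverge is the sufficiency decomposition. The paper subtracts the column limits from the matrix only, forms $\sum_k(\tilde a_{nk}-\alpha_k)y_k$ (its (4.8)) and asserts that this tends to $0$ (its (4.9)); as written, that step would require the row sums of the modified matrix to tend to $0$, i.e. $\alpha=\sum_k\alpha_k$, which is not among the hypotheses, and indeed (4.6) is never used in the paper's sufficiency argument. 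You instead center $y$ at its limit $l$, split $(Ax)_n$ into three terms, dispatch the row-sum term $l\sum_k\tilde a_{nk}\to l\alpha$ by (4.6), and apply the $(c_0:c)$ machinery to $(\tilde a_{nk}-\alpha_k)$ acting on $y-le\in c_0$; this is the classical Kojima--Schur argument, it produces the actual limit $l\alpha+\sum_k\alpha_k(y_k-l)$, and it is tighter than the paper's route. Two points you should make explicit in a final write-up: (i) Lemma 2 literally gives only membership of $(\tilde a_{nk}-\alpha_k)$ in $(c_0:c)$; to conclude that the middle sum tends to $0$ you need the companion fact (also in the Stieglitz--Tietz tables, or a short $\varepsilon$-argument using the uniform row-norm bound) that for such a matrix the limit on $c_0$ equals $\sum_k\beta_k z_k$, which vanishes here since the column limits are $0$ by (4.7); (ii) your test element $T^{-1}e$ for the necessity of (4.6) is preferable to the paper's $x=(1,2,3,\dots)$, which belongs to $c(u,v,\Delta)$ only for special choices of $u,v$.
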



\begin{thebibliography}{99}
\bibitem{ZUM1987} Z. U. Ahmad and Mursaleen, K\"{o}the-Toeplitz duals of
some new sequence spaces and their matrix maps, \textit{Publ. Inst. Math.
(Beograd)} 42$\left( 56\right) $ $\left( 1987\right) $ $57-61$.

\bibitem{BAHP2006} B. Altay and H. Polat, On some new Euler difference
sequence spaces, \textit{Southeast Asian Bull. Math.} $30$ $\left(
2006\right) $ 209-220.

\bibitem{ALBA2007} B. Altay and F. Ba\c{s}ar, The fine spectrum and the
matrix domain of the difference operator $\Delta $ on the sequence space $%
\ell _{p},\left( 0<p<\infty \right) ,$ \textit{Commun. Math. Anal. }2$\left(
2\right) \left( 2007\right) ,$ 1-11.

\bibitem{BAFB2006} B. Altay and F. Ba\c{s}ar, Some paranormed sequence
spaces of nonabsolute type derived by weighted, J. Math. Anal. Appl.
319(2006) 494-508.

\bibitem{C.Et} R. \c{C}olak and M. Et, On some generalized difference
sequence spaces and related matrix transformations,\textit{\ Hokkaido Math J,%
} ,26(3),$\left( 1997\right) $ 483-492.

\bibitem{vatan polat} V. Karakaya and H. Polat, Some New Paranormed Sequence
Spaces defined by Euler and Difference Operators, \textit{Acta Sci.
Math(Szeged)}, 7z(2010), 209--222.

\bibitem{HKIZ1981} H. K\i zmaz, On certain sequence space, \textit{Canad.
Math. Bull.} $24\left( 2\right) $ $\left( 1981\right) $ $169-176$.

\bibitem{Maddox} I. J. Maddox, \textit{Elements of Functional Analysis,} The
University Press, 2$^{nd}$ ed., Cambridge, $1988$.

\bibitem{HPFB2007} H. Polat and F. Ba\c{s}ar, Some Euler spaces of
difference sequences of order $m$, \textit{Acta Math. Sci. Ser. B England
Ed. }$\ 27B\left( 2\right) ~\left( 2007\right) ~254-266$.

\bibitem{W.R} W. H. Ruckle, \textit{Sequence spaces}, Pitman Publishing,
Toronto $1981$.

\bibitem{MS2004} E. Malkowsky and E. Sava\c{s}, Matrix transformations
between sequence spaces of generalized weighted mean, Appl. Math.Comput. 147
(2004) 333-345.

\bibitem{Garlin} D. J. H. Garling, The $\alpha -,$ $\beta -,$ $\gamma -$%
duality sequence spaces, Proc. Camb. Phil. Soc., 63(1967), 963-981.

\bibitem{stiti} M. Stieglitz and H. Tietz, Matrix transformationen von
Folgenraumen Eine Ergebnis\"{u}bersict, Math. Z. 154 (1977) 1-16.

\bibitem{ETBASAR} [10] M. Et and M. Ba\c{s}ar\i r, On some genaralized
difference sequence spaces, Period. Math.

Hung. 35 (3) (1997), 169- 175.

\bibitem{Wilansky} A. Wilansky, Summability Through Functional Analysis,
North-Holland Mathematics Studies, vol. 85, North-Holland, Amsterdam, 1984.
\end{thebibliography}
\end{document}